\documentclass[11pt]{amsart}
\usepackage{amsmath,amssymb,amsthm}
\usepackage[mathscr]{eucal}
\numberwithin{equation}{section}
\newtheorem{theorem}{Theorem}[section]
\newtheorem{lemma}[theorem]{Lemma}
\newcommand{\LL}{\mathcal{L}}
\newcommand{\R}{\mathbb{R}}
\newcommand{\T}{\mathbb{T}}
\newcommand{\Z}{\mathbb{Z}}
\newcommand{\dist}{\operatorname{dist}}
\newcommand{\norm}[1]{\left\|#1\right\|}
\allowdisplaybreaks

\begin{document}
\title[Endpoint failure for multilinear rough singular integrals]
{Multilinear rough singular integral operators may not be of the weak type at the endpoint}
\author{Petr Honz\'ik}
\address{Department of Mathematics, Charles University, 116 36 Praha 1, Czech Republic}
\email{honzik@gmail.com}
\thanks{P. Honz\'\i{}k was supported by the grant GA\v{C}R P201/24-10505S}
\subjclass[2010]{Primary 42B20, 47H60}
\keywords{Multilinear operators, rough singular integrals, Calder\'on--Zygmund theory}
\date{}

\begin{abstract}
For every $m\ge2$ and $n\ge1$, we construct a mean-zero function
$\Omega\in L^\infty(\mathbb S^{mn-1})$ for which the associated
$m$-linear homogeneous singular integral operator fails to map
$L^1(\R^n)\times\cdots\times L^1(\R^n)$ to $L^{1/m,\infty}(\R^n)$.
More generally, the corresponding weak estimate can fail whenever at
least two input exponents equal one. The proof combines a dimensional
construction with an arithmetic construction in dimension one, followed
by a multilinear lifting argument.
\end{abstract}
\maketitle

\section{Introduction}

Let $\Omega$ be an integrable function on the unit sphere
$\mathbb S^{n-1}$ with mean value zero. The rough singular integral
operator associated with $\Omega$ is defined by
\[
\LL_\Omega f(x)=\operatorname{p.v.}\int_{\R^n}
\frac{\Omega(y/|y|)}{|y|^n}f(x-y)\,dy,
\]
initially for $f$ in the Schwartz class $\mathcal S(\R^n)$.
Calder\'on and Zygmund \cite{Ca_Zy1956} proved that if
$\Omega\in L\log L(\mathbb S^{n-1})$, then $\LL_\Omega$ is bounded
on $L^p(\R^n)$ for all $1<p<\infty$. This result was improved by
Coifman and Weiss \cite{Co_We1977}, who replaced this assumption by
the less restrictive condition $\Omega\in H^1(\mathbb S^{n-1})$.
The same conclusion was obtained independently by Connett \cite{Co1979}.

Analogous questions for multilinear singular integral operators have
been studied by many authors. Let $m\ge2$ and let $\Omega$ be an
integrable function on $\mathbb S^{mn-1}$ with mean value zero. Set
\[
K(\vec y)=\frac{\Omega(\vec y/|\vec y|)}{|\vec y|^{mn}},
\qquad \vec y\ne0.
\]
The associated multilinear singular integral operator is defined by
\[
\LL_\Omega(f_1,\ldots,f_m)(x)
=\operatorname{p.v.}\int_{(\R^n)^m}
K(\vec y)\prod_{j=1}^m f_j(x-y_j)\,d\vec y,
\]
where $x\in\R^n$, $\vec y=(y_1,\ldots,y_m)\in(\R^n)^m$, and the
inputs are initially Schwartz functions.

Coifman and Meyer \cite{Co_Me1975} showed that if $\Omega$ is a
function of bounded variation on the circle $\mathbb S^1$, then
the corresponding bilinear operator is bounded from
$L^{p_1}(\R)\times L^{p_2}(\R)$ to $L^p(\R)$ when
$1<p_1,p_2,p<\infty$ and $1/p=1/p_1+1/p_2$. For general
$n\ge1$ and $m\ge2$, Grafakos and Torres \cite{Gr_To2002}
established the boundedness
\[
\LL_\Omega:L^{p_1}(\R^n)\times\cdots\times L^{p_m}(\R^n)
\longrightarrow L^p(\R^n),
\qquad \frac1p=\sum_{j=1}^m\frac1{p_j},
\]
for all $1<p_j<\infty$ when $\Omega$ is a Lipschitz function
on $\mathbb S^{mn-1}$.

For rough angular functions in the bilinear one-dimensional case,
Diestel, Grafakos, Honz\'ik, Si and Terwilleger \cite{counterex}
obtained estimates by a method of rotations involving the bilinear
Hilbert transform. Grafakos, He and Honz\'ik \cite{Gr_He_Ho2018}
proved the full range of bilinear estimates for bounded angular
functions. The corresponding multilinear result was obtained by
Grafakos, He, Honz\'ik and Park \cite{Gr_He_Ho_Pa2024}.

In the linear setting, weak type $(1,1)$ estimates for rough kernels
were proved in dimension two by Hofmann \cite{Ho1988} under the
assumption $\Omega\in L^s$, $s>1$, and by Christ and Rubio de
Francia \cite{Ch_Ru1988} under the $L\log L$ assumption. Seeger
\cite{Se1996} established the latter result in all dimensions.
For smooth multilinear kernels, weak endpoint estimates follow from
the theory of Grafakos and Torres \cite{Gr_To2002}.
Grafakos, He and Slav\'ikov\'a \cite{Gr_He_Sl2019}
obtained counterexamples to the bilinear weak endpoint estimate
in dimension one with $\Omega\in L^s(\mathbb S^1)$ for any
prescribed $1\le s<\infty$; their result does not cover bounded
angular kernels.

In this note, we show that weak endpoint estimates can fail
even for $\Omega\in L^\infty(\mathbb S^{mn-1})$, in every
dimension and for every degree of multilinearity $m\ge2$.
More precisely, we prove the following.

\begin{theorem}\label{thm:main}
Let $m,n\in\mathbb N$, $m\ge2$, and let
$1\le p_j\le\infty$, $j=1,\ldots,m$, with
\[
\frac1q=\sum_{j=1}^m\frac1{p_j},\qquad \frac1\infty=0.
\]
If $p_1=p_2=1$, then there exists a mean-zero function
$\Omega\in L^\infty(\mathbb S^{mn-1})$ such that
$\LL_\Omega$ is not bounded from
$L^{p_1}(\R^n)\times\cdots\times L^{p_m}(\R^n)$
to $L^{q,\infty}(\R^n)$.
The two $L^1$ factors may occupy any prescribed positions
after permuting the kernel variables.
\end{theorem}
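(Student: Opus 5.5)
Our plan is to reduce to the bilinear case with $p_1=p_2=1$, first in dimension one. Then we lift to general $n$ and $m$ by a tensor and lifting argument. The abstract's description (``a dimensional construction with an arithmetic construction in dimension one, followed by a multilinear lifting argument'') matches this route.

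\emph{Step 1 (bilinear, $n=1$, $L^1\times L^1\to L^{1/2,\infty}$).} We test $\LL_\Omega$ on sums of many well-separated approximate point masses. Take $f_1=\sum_{a\in A}\delta_a$ and $f_2=\sum_{b\in B}\delta_b$, mollified at a small scale $\varepsilon$, where $A,B\subset\Z$ are arithmetic sets of size $N$ (for instance $A=B=\{1,\dots,N\}$, or a set with many coincidences $a+b=c$). Then
\[
\LL_\Omega(f_1,f_2)(x)\approx\sum_{a,b}\frac{\Omega\bigl((x-a,x-b)/|(x-a,x-b)|\bigr)}{|(x-a,x-b)|^2}.
\]
The kernel is a function of the direction of $(x-a,x-b)$. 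Every pair with the same value of $a-b$ contributes along a common line $\{(t-a,t-b)\}$ of slope one, so all these pairs see the same direction $(1,1)/\sqrt2$ asymptotically.

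Since $\Omega$ is only required to be bounded, we choose it to be a sum of signed indicator functions of thin arcs near the directions $\pm(1,1)/\sqrt2$ and along a few arithmetic directions, arranged so that the contributions add with one sign instead of cancelling. Mean zero is restored by an opposite-sign arc far away, whose contribution at the relevant $x$ is lower order. The aim is to produce a set $E$ with
\[
|E|\,\lambda^{1/2}\gg \|f_1\|_1\|f_2\|_1=N^2
\]
for some level $\lambda$, i.e. $|\{|\LL_\Omega(f_1,f_2)|>\lambda\}|^{2}\lambda\gg N^2$. The gain should come from the fact that near each point $x$ with $\dist(x,A)\sim r$, about $N$ pairs line up in the aligned direction and contribute roughly $N\varepsilon^2/r^2$ each, coherently. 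This is a logarithmic or power gain coming from the arithmetic coincidences. We then let $N\to\infty$ (or superpose dyadic scales with the arcs' widths chosen lacunarily) to make the ratio unbounded.

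\emph{Step 2 (dimensions and multilinearity).} For $n>1$ we take tensor products $f_j(x)=g_j(x_1)\phi(x')$, with $\phi$ a bump at a scale much smaller than the features of $g_j$. We take $\Omega$ on $\mathbb S^{2n-1}$ concentrated near the plane of the first coordinates, equal to the one-dimensional construction there, so that the $x'$-integration just reproduces the one-dimensional kernel up to constants. For $m>2$ and $p_j\ge1$, $j\ge3$, we set $f_j=\chi_{B(0,R)}$ with $R$ huge. We choose $\Omega$ to depend essentially only on the first two blocks of variables, cut off to a narrow cone in which $y_3,\dots,y_m$ are small relative to $(y_1,y_2)$, and we correct the mean with a bounded term whose effect is controlled. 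Integrating out $y_3,\dots,y_m$ over that cone turns the operator into the bilinear one with a bounded, mean-zero induced angular function. The norms $\|f_j\|_{p_j}$ scale compatibly with the rescaling, so a failure of the bilinear weak estimate transfers to the multilinear one. Permuting kernel variables places the two $L^1$ slots anywhere.

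\emph{Main obstacle.} The hard part is Step 1. We must produce a genuinely bounded $\Omega$ whose contributions add coherently on a set of large measure, while the mean-zero correction and the off-diagonal pairs (with $a-b$ varying) do not cancel the gain. We would first try a lacunary family of arcs matched to the slopes $(x-a)/(x-b)$ that occur for $a,b$ in an arithmetic progression, and then estimate the error terms by counting lattice points.
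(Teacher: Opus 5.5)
There is a genuine gap, and it is exactly the step you flag as the main obstacle: Step~1 contains no mechanism that can work. Take $\Omega$ made of finitely many arcs (near $\pm(1,1)/\sqrt2$, a few arithmetic directions, and a compensating negative arc), and take lattice-type tests such as $A=B=\{1,\dots,N\}$ or progressions. Then the vectors $(x-a,x-b)$ form a translated lattice. In each dyadic annulus $t\le|v|<2t$, the lattice sum of $\Omega(v/|v|)|v|^{-2}$ equals $\log 2\int_{\mathbb S^1}\Omega\,d\sigma=0$ up to a boundary error $O(1/t)$. Summing over scales gives $\LL_\Omega(f_1,f_2)(x)=O(1)$ in the bulk of the support, away from diagonal singularities, so there is no gain at all.

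In particular, the negative arc is \emph{not} lower order: it cancels the positive arcs at the level of the $\log N$ main term. Your heuristic also looks at the wrong pairs. Each dyadic scale contributes $\asymp1$ with directions spread over the whole circle, and the near-diagonal pairs ($|a-b|\ll|x-a|$) carry only a proportional share. Making the arcs lacunary does not help by itself. What is needed is a reason why, for a set of $x$ of controlled density, \emph{all} relevant lattice directions land in $\{\Omega=1\}$. (Also, the target should be $\lambda|E|^2\gg N^2$, not $|E|\lambda^{1/2}\gg N^2$.)

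That is the missing idea: give up generic $x$ and work on a sparse arithmetic output set on which no cancellation occurs. With $B=A+\frac12$ one has $u(x-a,x-b)=(x-a)/d$, where $d=2(b-a)$ is odd and $|d|\le R$. Lemma~\ref{lem:sets} supplies a periodic $J$ with $|J|\sim r$ such that $x\bmod1\in\overline J$ forces $(x-a)/d\bmod1\in G$ for all such $d$ and all integers $a$, where $|G|\le4r$. Its proof uses neighbourhoods of rationals whose denominators lie in a box of $R$-smooth numbers, together with the growth bound of Lemma~\ref{lem:union-growth}.

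Setting $\Omega=1$ on $\{u\bmod1\in\bigcup_kG_k\}$ (infinitely many arcs accumulating at the diagonal) makes every pair contribute positively for $x\in E_k$. Hence $\LL_\Omega(f_k,g_k)\ge\frac14\log N_k$ on a set of measure $\ge r_kN_k$. You pay $r_k^{1/q}$ but win, because $N_k=2^{2^{k^2}}$ gives $r_k^{1/q}\log N_k\to\infty$. Since $\sum_k|G_k|$ is small, a single bounded mean-zero $\Omega$ serves all $k$.

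In Step~2, your $m>2$ lifting is essentially the paper's. With the homogeneous cutoff $K(y)\rho^{-(m-2)}\Phi(z/\rho)$ no mean correction is needed, and the extra inputs should live at scale $\sim N$, not larger. Your $n>1$ reduction, however, fails as stated. With a \emph{small} transverse bump $\phi$, the $y'$-integration produces an effective kernel of size $t^{-2n}$ at scale $t=|(y_{1,1},y_{2,1})|$, not $t^{-2}$, which destroys the logarithm. You would need transverse cutoffs at scale $N$ and a kernel whose transverse profile scales with $\rho$, as in your $m>2$ step. The paper instead handles $n\ge2$ directly: there the relevant directions lie in a finite union of proper subspaces, so no arithmetic is needed.
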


We use the convention
\[
\norm{F}_{L^{q,\infty}}
=\sup_{\lambda>0}\lambda\,
\bigl|\{x:|F(x)|>\lambda\}\bigr|^{1/q}.
\]
All counterexamples below are witnessed by nonnegative smooth
compactly supported inputs. The first two input supports are
separated, so the integrals used in the proof are absolutely
convergent. The proof is given in Sections~\ref{sec:dimensional}
and~\ref{sec:arithmetic}.

\section{Overlap lemmas}\label{sec:overlap}

In order to construct the kernel, we will need the following
lemmas, which control the growth of the sets. Throughout this
section, $\T=\R/\Z$ carries normalized Lebesgue measure.

\begin{lemma}[Growth of inverse-image unions]\label{lem:union-growth}
Let $T:\T\to\T$ be a map such that $T^{-1}A$ is measurable
and $|T^{-1}A|=|A|$ for every Lebesgue measurable set
$A\subset\T$. If $E\subset\T$ is measurable and
$1\le L\le M$ are integers, then
\[
\left|\bigcup_{j=0}^{M-1}T^{-j}E\right|
\le\frac ML\left|\bigcup_{j=0}^{L-1}T^{-j}E\right|.
\]
\end{lemma}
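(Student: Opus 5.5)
Set $U_k=\bigcup_{j=0}^{k-1}T^{-j}E$ for $k\ge1$ and $a_k=|U_k|$. The idea is to show that $a_k$ is subadditive, $a_{k+l}\le a_k+a_l$. From there the inequality follows from a division-with-remainder argument of Fekete type, adjusted so that we lose nothing when $L\nmid M$.

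\emph{Subadditivity.} For $k,l\ge1$ we have
\[
U_{k+l}=U_k\cup\bigcup_{j=k}^{k+l-1}T^{-j}E=U_k\cup T^{-k}U_l,
\]
because $T^{-k}T^{-i}E=T^{-(k+i)}E$ and preimages commute with unions. The measure-preservation hypothesis gives $|T^{-k}U_l|=|U_l|$; this is applied to the set $T^{-(k-1)}U_l$ repeatedly, and each of these sets is measurable by hypothesis. Hence $a_{k+l}\le a_k+a_l$. The same argument shows that $a_k$ is nondecreasing in $k$.

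\emph{Averaging step.} Write $M=qL+r$ with $0\le r<L$. Subadditivity gives $a_M\le q\,a_L+a_r$, where $a_0=0$. This is not directly $\le (M/L)a_L$, since we need $a_r\le (r/L)a_L$, and that requires the refined bound $a_r/r\ge a_L/L$ for $r\le L$. That bound is false in general: take $T$ the identity, so $a_k=|E|$ is constant. So I would instead average over shifts.

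For each $s\in\{0,\dots,L-1\}$, cover $\{0,\dots,M-1\}$ by the blocks $\{s+iL,\dots,s+iL+L-1\}$ for $i=-1,0,\dots$, intersected with $[0,M)$. Each block contributes at most $a_L$, since it is a preimage $T^{-t}$ of a subset of $U_L$. The number of blocks needed is $\lceil (M+s)/L\rceil$, which can exceed $M/L$, so this alone is still lossy.

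A cleaner route is the indicator-function count. For $x\in U_M$, let $j(x)$ be the least $j<M$ with $T^jx\in E$. Then
\[
\sum_{t=0}^{M-1}\mathbf 1_{U_L}(T^tx)\ge\#\{t<M:\ t\le j(x)\le t+L-1\}\ge\min(L,\,j(x)+1)
\]
for $x\in U_M$. Integrating and using $\int\mathbf 1_{U_L}\circ T^t=a_L$ yields
\[
M a_L\ge\int_{U_M}\min\bigl(L,j(x)+1\bigr)\,dx .
\]

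\emph{Main obstacle.} The points with small $j(x)$ contribute only $j(x)+1<L$, so the count above gives only $\ge a_M$ rather than $\ge L\,a_M$. To repair this, count the indices $t$ on both sides of $j(x)$, that is, windows $[t,t+L-1]\ni j(x)$ with $t$ ranging over negative values as well. Points whose first hit is at $j$ are then covered by $L$ windows $T^{-t}U_L$, $t\in\{j-L+1,\dots,j\}$.

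The negative shifts do not make sense as preimages, so I would handle the range by comparing with $U_{M+L-1}$ and then using the right endpoint. Concretely, use the last hitting time instead: let $j'(x)$ be the largest $j<M$ with $T^jx\in E$. The windows $t\in\{j'-L+1,\dots,j'\}\cap[0,M-L]$ all contain $j'$, and one combines this with the first-hit count for the remaining points.

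I expect this bookkeeping to be the crux: one must show that every $x\in U_M$ is counted at least $L$ times in $\sum_{t=0}^{M-L}\mathbf 1_{T^{-t}U_L}(x)$ plus $\sum_{t=0}^{L-1}\mathbf 1_{T^{-(M-L)}T^{-t}\dots}$, or find an equivalent covering that yields exactly $M a_L\ge L a_M$.
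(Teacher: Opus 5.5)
Your proposal does not prove the lemma. Subadditivity $a_{k+l}\le a_k+a_l$ is correct but too weak. The sequence $a_1=a_2=1$, $a_3=2$ is nondecreasing and subadditive, yet $a_3/3>a_2/2$. So no division-with-remainder or shifted-block bookkeeping based on subadditivity alone can give $a_M\le (M/L)a_L$.

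There is also a slip in that paragraph. The bound you need is $a_r/r\le a_L/L$, and that one does fail for the identity map. The bound you wrote, $a_r/r\ge a_L/L$ for $r\le L$, is the lemma itself, and the identity map satisfies it.

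The counting approach stalls for the reason you give, and your last two paragraphs are a plan rather than an argument. The pointwise comparison of $\sum_{t<M}\mathbf 1_{U_L}\circ T^t$ with $L\,\mathbf 1_{U_M}$ is false. A point whose only visit to $E$ among the times $0,\dots,M+L-2$ is at time $0$ lies in $U_M$ but is counted once. Using the last hitting time instead does not help for such a point.

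The missing idea is to peel off the first set rather than the last: $U_{k+1}=E\cup T^{-1}U_k$. Measure preservation then gives
\[
a_{k+1}-a_k=\bigl|E\setminus T^{-1}U_k\bigr|.
\]
Since $T^{-1}U_k$ increases with $k$, these increments are nonincreasing. Thus $(a_k)$ with $a_0=0$ is concave, so $a_k/k$ is nonincreasing, which is the claim. This is the paper's proof.

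If you want to keep your counting route, compare $\sum_{t=0}^{M-1}\mathbf 1_{T^{-t}U_L}$ pointwise with $\sum_{s=0}^{L-1}\mathbf 1_{T^{-s}U_M}$ rather than with $L\,\mathbf 1_{U_M}$; the latter sum also integrates to $La_M$. This pointwise inequality is true. To check it, count, gap by gap in the set of hitting times in $[0,M+L-2]$, the windows of length $L$ and of length $M$ that miss the hitting set. It takes noticeably more work than the concavity argument.
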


\begin{proof}
Set $V_0=\varnothing$, $V_m=\bigcup_{j=0}^{m-1}T^{-j}E$
and $v_m=|V_m|$. Since $V_{m+1}=E\cup T^{-1}V_m$ and
$|T^{-1}V_m|=|V_m|$, we have
\[
v_{m+1}-v_m
=\left|E\setminus\bigcup_{j=1}^{m}T^{-j}E\right|.
\]
The sets on the right decrease as $m$ increases. Thus the
increments $v_{m+1}-v_m$ are nonincreasing, and consequently
their averages $v_m/m$ are nonincreasing. This gives the assertion.
\end{proof}

Fix an integer $R\ge3$, and let $\pi_1,\ldots,\pi_s$ be the
primes not exceeding $R$. Put
\[
D=\lceil\log_2 R\rceil,\qquad L=2sD,
\]
and define the finite sets of positive integers
\[
A=\left\{\prod_{i=1}^s\pi_i^{a_i}:0\le a_i<L\right\},
\qquad
B=\left\{\prod_{i=1}^s\pi_i^{b_i}:0\le b_i<L+D\right\}.
\]
Every positive odd integer $d\le R$ has all its prime exponents
at most $D$. Hence
\begin{equation}\label{eq:box}
dA\subset B\qquad(d\text{ odd},\ 1\le d\le R).
\end{equation}

\begin{lemma}\label{lem:sets}
For every $0<r<1/4$, there are open sets $J,G\subset\T$,
each a finite union of intervals with rational endpoints, such that
\[
\frac r2<|J|\le r,\qquad |G|\le4r,
\]
and
\begin{equation}\label{eq:exact}
\begin{gathered}
x\bmod1\in\overline J,\qquad d\text{ odd},\quad0<|d|\le R,
\qquad j\in\Z
\\
\Longrightarrow\quad \frac{x-j}{d}\bmod1\in G.
\end{gathered}
\end{equation}
Here $x\in\R$, and the closure is taken in $\T$.
\end{lemma}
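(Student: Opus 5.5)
The plan is to build both sets as unions of preimages of one small symmetric interval under the multiplication maps. We bound $G$ against $J$ using a multi-parameter version of Lemma~\ref{lem:union-growth} together with the inclusion \eqref{eq:box}.

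\emph{Reduction.} For a positive integer $k$ write $T_k x=kx \bmod 1$; this map preserves Lebesgue measure on $\T$ and sends intervals with rational endpoints to finite unions of such. Fix $d$ with $0<|d|\le R$ and $x \bmod 1 \in \overline J$. As $j$ ranges over $\Z$, the points $(x-j)/d \bmod 1$ are exactly the points $y$ with $T_{|d|}y=\pm x \bmod 1$. So it suffices to find $J,G$ with $J=-J$ and $T_{|d|}^{-1}\overline J\subset G$ for all odd $1\le |d|\le R$. Fix a rational $\varepsilon\in(0,1/2)$, let $E=E_\varepsilon=(-\varepsilon,\varepsilon)$, and define
\[
J=\bigcup_{a\in A}T_a^{-1}E,\qquad G'=\bigcup_{b\in B}T_b^{-1}E .
\]
Both sets are symmetric, since $T_k(-y)=-T_ky$. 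Since $T_d^{-1}T_a^{-1}=T_{da}^{-1}$, \eqref{eq:box} gives $T_d^{-1}J\subset G'$ for odd $d\le R$.

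\emph{Measure comparison.} This is the main step. Write $T^{-\beta}=\prod_i T_{\pi_i}^{-\beta_i}$ for multi-indices $\beta$; the maps $T_{\pi_i}$ commute. Put $U(k_1,\dots,k_s)=\bigcup_{0\le\beta_i<k_i}T^{-\beta}E$. Fix all coordinates except the $i$-th. Apply Lemma~\ref{lem:union-growth} with the map $T_{\pi_i}$ and with the set $E$ replaced by the union over the other coordinates. This gives
\[
|U(\dots,L+D,\dots)|\le\tfrac{L+D}{L}\,|U(\dots,L,\dots)|.
\]
Iterating over $i=1,\dots,s$ yields
\[
|G'|\le\Bigl(1+\tfrac{D}{L}\Bigr)^s|J|=\Bigl(1+\tfrac1{2s}\Bigr)^s|J|\le e^{1/2}|J|<2|J|.
\]

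\emph{Choosing $\varepsilon$, and the closure.} The function $\varepsilon\mapsto|J_\varepsilon|$ is a finite union of intervals whose endpoints move continuously. It is therefore continuous and nondecreasing, tends to $0$ as $\varepsilon\to0$, and equals $1$ at $\varepsilon=1/2$. Since $r<1/4$, we can pick a rational $\varepsilon$ with $r/2<|J_\varepsilon|\le r$.

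To handle $\overline J$, set $G=G'_{\varepsilon'}$ for a rational $\varepsilon'>\varepsilon$ close to $\varepsilon$. Then $\overline{E_\varepsilon}\subset E_{\varepsilon'}$, so $T_d^{-1}\overline J\subset G$. By continuity, $|G|\le 2|J_{\varepsilon'}|\le 4r$ once $\varepsilon'$ is close enough that $|J_{\varepsilon'}|\le 2r$. Both $J$ and $G$ are open finite unions of intervals with rational endpoints, which finishes the plan.

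The only delicate point is the multi-parameter iteration of Lemma~\ref{lem:union-growth}. There one must check that the hypothesis $|T^{-1}A|=|A|$ applies to each $T_{\pi_i}$, and that commutativity lets the union over the remaining coordinates serve as the set $E$ in the lemma.
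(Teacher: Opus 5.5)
Your proposal is correct and follows the paper's proof: your $J_\varepsilon$ and $G'_{\varepsilon'}$ are the paper's $H_A(2\varepsilon)$ and $H_B(2\varepsilon')$, and the comparison $|G'|<2|J|$ is exactly the paper's \eqref{eq:union-growth}, obtained by iterating Lemma~\ref{lem:union-growth} over the commuting maps $T_{\pi_i}$ together with \eqref{eq:box}. The only difference is how the width is fixed: the paper takes the least dyadic $\eta$ with $F_A(\eta)\le r$ and uses the doubling bound $F_C(2t)\le 2F_C(t)$ both to get $|J|>r/2$ and to absorb the closure via $G=H_B(2\eta)$, whereas you use continuity of $\varepsilon\mapsto|J_\varepsilon|$ and an enlargement $\varepsilon'\downarrow\varepsilon$, which avoids the doubling lemma at the cost of a less explicit (though still rational) choice of width.
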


\begin{proof}
Write $\|v\|_{\T}=\dist(v,\Z)$. For a finite set $C$ of
positive integers and $0<t\le1$, let
\[
H_C(t)=\bigcup_{a\in C}\{x\in\T:\|ax\|_{\T}<t/2\},
\qquad F_C(t)=|H_C(t)|.
\]
We first show that
\begin{equation}\label{eq:union-growth}
F_B(t)\le\left(1+\frac DL\right)^sF_A(t)<2F_A(t).
\end{equation}
For each $i$, the map $T_i(x)=\pi_i x\bmod1$ satisfies the
hypotheses of Lemma~\ref{lem:union-growth}. Indeed, identifying
$\T$ with $[0,1)$, for every measurable $U\subset[0,1)$ we have
\[
T_i^{-1}U=\bigcup_{j=0}^{\pi_i-1}\frac{U+j}{\pi_i}.
\]
These sets are measurable and disjoint, lie in the intervals
$[j/\pi_i,(j+1)/\pi_i)$, and each has measure $|U|/\pi_i$.
Thus $|T_i^{-1}U|=|U|$. The maps $T_i$ commute, so we can
apply Lemma~\ref{lem:union-growth} successively in each coordinate,
enlarging its exponent range from $L$ to $L+D$. This gives the
first inequality in \eqref{eq:union-growth}. The second follows from
\[
\left(1+\frac DL\right)^s
\le\exp\left(\frac{sD}{L}\right)=\sqrt e<2.
\]

We also use the elementary estimate
\begin{equation}\label{eq:interval-doubling}
F_C(2t)\le2F_C(t),\qquad0<t\le1/2.
\end{equation}
The intervals defining $H_C(2t)$ are obtained by doubling about
their centers the intervals defining $H_C(t)$. If $F_C(t)=1$,
the assertion is immediate. Otherwise every component of $H_C(t)$
is a proper arc, and the doubled intervals belonging to this
component lie in the concentric doubled arc. Summing the lengths
over the components proves \eqref{eq:interval-doubling}.

Choose the least nonnegative integer $\ell$ such that
$F_A(2^{-\ell})\le r$, and set $\eta=2^{-\ell}$. Such an
integer exists because $F_A(t)\le |A|t\to0$ as $t\to0$.
Since $F_A(1)=1$, we have $\ell\ge1$, and the minimality of
$\ell$, together with \eqref{eq:interval-doubling}, gives
\[
r<F_A(2\eta)\le2F_A(\eta).
\]
Thus $r/2<F_A(\eta)\le r$. Define
\[
J=H_A(\eta),\qquad G=H_B(2\eta).
\]
By \eqref{eq:union-growth} and \eqref{eq:interval-doubling},
\[
|G|<2F_A(2\eta)\le4F_A(\eta)\le4r.
\]

If $x\bmod1\in\overline J$, then for some $a\in A$ we have
$\|ax\|_{\T}\le\eta/2$. Let $d$ and $j$ be as in
\eqref{eq:exact}, and put $v=(x-j)/d$. By \eqref{eq:box},
$b=a|d|\in B$, and
\[
bv=\operatorname{sgn}(d)a(x-j),\qquad
\|bv\|_{\T}=\|ax\|_{\T}\le\eta/2<\eta.
\]
Consequently $v\bmod1\in G$, including when
$x\bmod1$ lies on the boundary of $J$. Finally, the endpoints
of the intervals are rational because $\eta$ is dyadic. When
$r$ is rational, the choice of $\ell$ requires only finite exact
calculations with unions of intervals.
\end{proof}

\section{Dimensional construction}\label{sec:dimensional}

When $n\ge2$, or when $n=1$, $m\ge3$ and $p_3=1$, we can
use simple dimensional considerations to prove
Theorem~\ref{thm:main}.

For an integer $N\ge1$, set
\[
A_N=\Z^n\cap[-2N,2N]^n,\qquad B_N=A_N+\tfrac12 e_1,
\]
where $e_1$ is the first coordinate vector. Define
\[
C_N=
\begin{cases}
A_N\times B_N\times(\R^n)^{m-2},& n\ge2,\\
A_N\times B_N\times A_N\times\R^{m-3},& n=1.
\end{cases}
\]
An empty product of free factors is omitted. Write $C_N$ as a
finite union of affine subspaces $Q_{N,\nu}$, obtained by fixing
the lattice coordinates and leaving the remaining coordinates free.
Let
\[
\Delta=\{(x,\ldots,x):x\in\R^n\},\qquad
D_N=\bigcup_\nu\operatorname{span}(\Delta\cup Q_{N,\nu}).
\]
If $Q_{N,\nu}=c_\nu+V$, then
\[
\operatorname{span}(\Delta\cup Q_{N,\nu})
=\Delta+V+\operatorname{span}\{c_\nu\}.
\]
Its dimension is therefore at most
\[
\dim\Delta+\dim Q_{N,\nu}+1
=\begin{cases}
(m-1)n+1\le mn-1,& n\ge2,\\
m-1=mn-1,& n=1.
\end{cases}
\]
Thus $D_N$ is a finite union of proper linear subspaces, and
\[
E_N=\mathbb S^{mn-1}\cap D_N
\]
is closed and has surface measure zero. Notice also that
$(x,\ldots,x)-z\in D_N$ for every $x\in\R^n$ and $z\in C_N$.

Write $\sigma_{mn-1}$ for surface measure on $\mathbb S^{mn-1}$.
Choose $0<\varepsilon(N)\le1$ so that the open neighborhood
\[
F_N=\bigl(E_N+B(0,\varepsilon(N))\bigr)\cap\mathbb S^{mn-1}
\]
has measure at most $2^{-N-20}\sigma_{mn-1}(\mathbb S^{mn-1})$.
Put
\[
G=\bigcup_{N=20}^\infty F_N,\qquad
\Omega=\mathbf1_G-\alpha\mathbf1_{\mathbb S^{mn-1}\setminus G},
\]
where
\[
\alpha=\frac{\sigma_{mn-1}(G)}
{\sigma_{mn-1}(\mathbb S^{mn-1})-\sigma_{mn-1}(G)}.
\]
Then $\Omega$ has mean value zero. Moreover,
$\sigma_{mn-1}(G)\le2^{-39}\sigma_{mn-1}(\mathbb S^{mn-1})$,
so $\alpha<1$ and $\norm{\Omega}_\infty=1$.

Take a nonnegative function $\varphi\in C_c^\infty(B(0,1))$
with integral one, and a function
$\psi\in C_c^\infty((-2,2)^n)$ with $0\le\psi\le1$ and
$\psi=1$ on $[-1,1]^n$. For sufficiently large $N$, put
$\delta_N=\varepsilon(4N)/100$ and set
\[
f_{1,N}(u)=\sum_{a\in A_N}\delta_N^{-n}
\varphi\left(\frac{u-a}{\delta_N}\right),\qquad
f_{2,N}(u)=\sum_{b\in B_N}\delta_N^{-n}
\varphi\left(\frac{u-b}{\delta_N}\right).
\]
In the case $n=1$, $m\ge3$, also set $f_{3,N}=f_{1,N}$.
For every remaining input, set $f_{j,N}(u)=\psi(u/N)$.
The lattice inputs have $L^1$ norm $(4N+1)^n\asymp N^n$,
and the other inputs satisfy
\[
\norm{f_{j,N}}_{p_j}=N^{n/p_j}\norm{\psi}_{p_j},
\]
with the usual interpretation when $p_j=\infty$.

We next verify that all interactions are positive for
$x\in[-N,N]^n$. Given a point $s=(s_1,\ldots,s_m)$ in the
product of the input supports, choose $z\in C_N$ by replacing
each lattice input coordinate by its corresponding lattice center
and keeping the free coordinates unchanged. Since there are at
most three lattice inputs, $|s-z|\le\sqrt3\delta_N$. Let
\[
v=(x-z_1,\ldots,x-z_m),\qquad
w=(x-s_1,\ldots,x-s_m).
\]
The lattice separation $|z_1-z_2|\ge1/2$ implies
$|v|\ge1/(2\sqrt2)$, and hence
\[
\left|\frac{w}{|w|}-\frac{v}{|v|}\right|
\le\frac{2|w-v|}{|v|}
\le4\sqrt6\delta_N<\varepsilon(4N).
\]
Since $v/|v|\in E_N\subset E_{4N}$, we have
$w/|w|\in F_{4N}\subset G$. The first two input supports
are separated, so the integral is absolutely convergent and
\[
\LL_\Omega(f_{1,N},\ldots,f_{m,N})(x)
=\int_{(\R^n)^m}
\frac{\prod_{j=1}^m f_{j,N}(s_j)}
{|(x-s_1,\ldots,x-s_m)|^{mn}}\,d\vec s.
\]

To obtain a logarithmic lower bound, consider the scales
$t=4^\ell$ with $4\le t\le N/4$. For the first lattice
input, select centers $a$ satisfying
\[
t\le a_1-x_1\le2t,\qquad |a_i-x_i|\le t\quad(2\le i\le n).
\]
For every other lattice input, select centers at coordinatewise
distance at most $t$ from $x$. These centers belong to the
relevant sets $A_N$ or $B_N$, and each selection contains at
least $c_n t^n$ points. For each free input, restrict $s_j$ to
\[
[-N,N]^n\cap\bigl(x+[-t,t]^n\bigr),
\]
which has volume at least $t^n$ and on which $f_{j,N}=1$.
On this region,
$|(x-s_1,\ldots,x-s_m)|\le C_{m,n}t$.
Each lattice bump has integral one, so the contribution from
each scale is bounded below by a positive constant independent
of $N$ and $x$. The ranges of the first coordinate of the first
input are disjoint for distinct scales, even after thickening
by $\delta_N$. Summing the contributions gives
\[
\LL_\Omega(f_{1,N},\ldots,f_{m,N})(x)\gtrsim\log N,
\qquad x\in[-N,N]^n.
\]
Consequently,
\[
\norm{\LL_\Omega(f_{1,N},\ldots,f_{m,N})}_{L^{q,\infty}}
\gtrsim N^{n/q}\log N,
\qquad
\prod_{j=1}^m\norm{f_{j,N}}_{p_j}\asymp N^{n/q}.
\]
The ratio tends to infinity. This proves the theorem in the
cases considered in this section.

\section{Arithmetic construction}\label{sec:arithmetic}

In the remaining cases we use a more complicated arithmetic
construction in dimension one. We first construct a bilinear
counterexample, and then lift it to the multilinear setting.
In fact, this construction works for every $m\ge2$ on $\R$
whenever two prescribed inputs belong to $L^1$.

We write $\sigma$ for arc length measure on $\mathbb S^1$.
For $y=(y_1,y_2)$ with $y_1\ne y_2$, define
\begin{equation}\label{eq:coordinate}
u(y)=\frac{y_1}{2(y_1-y_2)}.
\end{equation}
This coordinate is homogeneous of degree zero and satisfies
$u(-y)=u(y)$.

To explain the role of the arithmetic construction, consider
the pairs $(a,a+1/2)$, where $a\in\Z\cap[-N,N]$.
As $x$ ranges over $[-N,N]$, the coordinate
$u(x-a,x-a-1/2)=x-a$ fills $[-2N,2N]$.
The pairs $(a,a-1/2)$ give the opposite semicircle.
Thus requiring positivity for all $x\in[-N,N]$ covers all
but arcs of length $O(1/N)$. A two-valued mean-zero angular
function normalized to have $L^\infty$ norm one would then
have a positive value tending to zero. We instead use periodic
output sets of small controlled density and compensate for this
loss by the logarithmic gain on a much larger spatial lattice.

\subsection{One fixed angular kernel}

For $k\ge1$, let
\begin{equation}\label{eq:scales}
r_k=2^{-k-10},\qquad N_k=2^{2^{k^2}},\qquad R_k=12N_k+1.
\end{equation}
For every fixed $q>0$, these choices give
\begin{equation}\label{eq:growth}
r_k^{1/q}\log N_k
=(\log2)\,2^{k^2-(k+10)/q}\longrightarrow\infty.
\end{equation}
Apply Lemma~\ref{lem:sets} with $R=R_k$ and $r=r_k$,
obtaining sets $J_k,G_k$. Set
$G_* =\bigcup_{k\ge1}G_k$. Then
\begin{equation}\label{eq:small}
|G_*|\le4\sum_{k\ge1}r_k=\frac1{256}.
\end{equation}
Define the open antipodally symmetric set
\[
\Gamma_0=\{\theta\in\mathbb S^1:\theta_1\ne\theta_2,
\ u(\theta)\bmod1\in G_*\}.
\]
One of the two semicircles complementary to the diagonal is
parametrized by
\[
\theta(u)=\frac{(2u,2u-1)}{\sqrt{(2u)^2+(2u-1)^2}},
\qquad u\in\R.
\]
Including the opposite semicircle and periodizing gives
\begin{equation}\label{eq:weight}
\begin{aligned}
\sigma(\Gamma_0)&=\int_0^1\mathbf1_{G_*}(u)W(u)\,du,\\
W(u)&=\sum_{j\in\Z}\frac8{1+16(u+j-1/4)^2}\le13.
\end{aligned}
\end{equation}
Indeed, the term corresponding to the nearest integer is at
most $8$, and the remaining terms have sum at most
\[
\sum_{j=1}^\infty\frac1{(j-1/2)^2}=\frac{\pi^2}{2}.
\]
Thus $W(u)\le8+\pi^2/2<13$. By \eqref{eq:small},
$\sigma(\Gamma_0)\le13/256<\pi$.
Enlarge $\Gamma_0$ to a measurable antipodally symmetric set
$\Gamma$ of arc length exactly $\pi$. This is possible by
adding a measurable subset of its complement in one semicircle
and the antipodal image of that subset. Set
\begin{equation}\label{eq:Omega}
\Omega=\mathbf1_\Gamma-\mathbf1_{\mathbb S^1\setminus\Gamma},
\qquad K(y)=\frac{\Omega(y/|y|)}{|y|^2}.
\end{equation}
This is one fixed kernel, with $\norm{\Omega}_\infty=1$
and $\int_{\mathbb S^1}\Omega\,d\sigma=0$. Its angular
function equals one on every direction in $\Gamma_0$.

\subsection{Smooth lattice tests and the bilinear estimate}

Fix $k$, write $N=N_k$, and let
\[
A_N=\Z\cap[-3N,3N],\qquad B_N=A_N+\tfrac12,
\qquad E_k=\{x\in[-N,N]:x\bmod1\in J_k\}.
\]
Since $N$ is an integer,
\begin{equation}\label{eq:E}
|E_k|=2N|J_k|\ge r_kN.
\end{equation}
For $a\in A_N$ and $b\in B_N$, the integer $d=2(b-a)$
is odd and satisfies $1\le|d|\le12N+1=R_k$. Moreover,
\[
u(x-a,x-b)=\frac{x-a}{d}.
\]
Lemma~\ref{lem:sets}, with $j=a$, shows that the direction
of $(x-a,x-b)$ lies in $\Gamma_0$ whenever
$x\in\overline{E_k}$. The directions corresponding to this
compact output set and the finitely many lattice pairs form a
compact subset of the open set $\Gamma_0$. Also,
$|a-b|\ge1/2$, so the vectors stay away from zero.
Consequently, there is a common $0<\delta_k<1/20$ such that
\[
\frac{(x-s,x-t)}{|(x-s,x-t)|}\in\Gamma_0
\]
whenever $x\in\overline{E_k}$, $(a,b)\in A_N\times B_N$,
$|s-a|\le\delta_k$, and $|t-b|\le\delta_k$.

Take a nonnegative $\varphi\in C_c^\infty((-1,1))$ with
integral one, and define
\[
f_k(s)=\sum_{a\in A_N}\delta_k^{-1}
\varphi\left(\frac{s-a}{\delta_k}\right),\qquad
g_k(s)=\sum_{b\in B_N}\delta_k^{-1}
\varphi\left(\frac{s-b}{\delta_k}\right).
\]
Then $\norm{f_k}_1=\norm{g_k}_1=6N+1$, independently
of $\delta_k$. Their supports are separated by at least
$1/2-2\delta_k>2/5$, and hence the defining integrals are
absolutely convergent. By the choice of $\delta_k$,
\begin{equation}\label{eq:positive}
\LL_\Omega(f_k,g_k)(x)
=\int_{\R^2}\frac{f_k(s)g_k(t)}{(x-s)^2+(x-t)^2}\,ds\,dt,
\qquad x\in E_k.
\end{equation}

For every positive odd integer $d\le N$, choose the integers
$a\in[x-d,x]$ and put $b=a+d/2$. There are at least $d$
such integers $a$. Since $x\in[-N,N]$, we have
$a\in[-2N,N]$ and
$b-1/2=a+(d-1)/2\in[-2N,2N]$; thus all these pairs
belong to $A_N\times B_N$. Throughout their bumps,
\[
(x-s)^2+(x-t)^2
\le(d+\delta_k)^2+(d/2+\delta_k)^2\le2d^2.
\]
Every product of two bumps has integral one, and therefore
contributes at least $1/(2d^2)$ to \eqref{eq:positive}.
We obtain
\begin{equation}\label{eq:main}
\LL_\Omega(f_k,g_k)(x)
\ge\frac12\sum_{\substack{1\le d\le N\\d\text{ odd}}}\frac1d
\ge\frac14\log N,\qquad x\in E_k.
\end{equation}
Using the threshold $\lambda=(\log N_k)/8$ and \eqref{eq:E},
we conclude that
\[
\frac{\norm{\LL_\Omega(f_k,g_k)}_{L^{1/2,\infty}}}
{\norm{f_k}_1\norm{g_k}_1}
\ge\frac{(\log N_k)(r_kN_k)^2}{8(6N_k+1)^2}
\ge\frac{r_k^2\log N_k}{392}\longrightarrow\infty,
\]
where the last step follows from \eqref{eq:growth} with
$q=1/2$. This proves Theorem~\ref{thm:main} for $m=2$, $n=1$.

\subsection{Multilinear lifting}

Let $m\ge3$, let $1\le p_3,\ldots,p_m\le\infty$, and set
\[
\frac1q=2+\sum_{j=3}^m\frac1{p_j}.
\]
Choose a nonnegative function
$\Phi\in C_c^\infty((-1,1)^{m-2})$ with integral one.
For $y=(y_1,y_2)$, $z=(z_3,\ldots,z_m)$ and $\rho=|y|>0$,
define
\[
\widetilde K_m(y,z)
=K(y)\rho^{-(m-2)}\Phi(z/\rho)
=\Omega(y/\rho)\rho^{-m}\Phi(z/\rho),
\]
and set it equal to zero when $y=0$. This kernel is homogeneous
of degree $-m$; write $\widetilde\Omega_m$ for its angular
function on $\mathbb S^{m-1}$. On the support of the kernel,
$|z|^2\le(m-2)\rho^2$. Thus on the unit sphere,
$\rho\ge(m-1)^{-1/2}$, which shows that
$\widetilde\Omega_m$ is bounded.

To verify cancellation, use the change of variables
$y=\rho\theta$, $z=\rho v$, whose Jacobian is
$\rho^{m-1}\,d\rho\,d\sigma(\theta)\,dv$. Then
\begin{align*}
(\log2)\int_{\mathbb S^{m-1}}\widetilde\Omega_m
&=\int_{1<|(y,z)|<2}\widetilde K_m(y,z)\,dy\,dz\\
&=(\log2)\left(\int_{\mathbb S^1}\Omega\,d\sigma\right)
\left(\int_{\R^{m-2}}\Phi(v)\,dv\right)=0.
\end{align*}
Indeed, for each fixed $v$ the radial interval is
\[
(1+|v|^2)^{-1/2}<\rho<2(1+|v|^2)^{-1/2},
\]
and the radial integral of $d\rho/\rho$ equals $\log2$.

Let $\LL_{\widetilde\Omega_m}$ be the associated multilinear
operator. Choose $\chi\in C_c^\infty((-9,9))$ such that
$0\le\chi\le1$ and $\chi=1$ on $[-8,8]$, and set
\[
h_{j,k}(s)=\chi(s/N_k),\qquad j=3,\ldots,m.
\]
If $|x|\le N_k$ and $f_k(x-y_1)g_k(x-y_2)\ne0$, then
$\rho\le7N_k$. On the support of $\Phi(z/\rho)$ we have
$|z_j|<\rho$, so $|x-z_j|\le8N_k$ and all the additional
input factors equal one. Integrating in $z$ therefore gives
\[
\LL_{\widetilde\Omega_m}(f_k,g_k,h_{3,k},\ldots,h_{m,k})(x)
=\LL_\Omega(f_k,g_k)(x),\qquad |x|\le N_k.
\]
The integrals are absolutely convergent because the first two
input supports are separated. Finally,
\[
\norm{h_{j,k}}_{p_j}=N_k^{1/p_j}\norm{\chi}_{p_j},
\]
with the same convention for $p_j=\infty$. Combining
\eqref{eq:E} and \eqref{eq:main}, we obtain
\begin{multline*}
\frac{\norm{\LL_{\widetilde\Omega_m}
(f_k,g_k,h_{3,k},\ldots,h_{m,k})}_{L^{q,\infty}}}
{\norm{f_k}_1\norm{g_k}_1\prod_{j=3}^m\norm{h_{j,k}}_{p_j}}
\\
\gtrsim r_k^{1/q}\log N_k\longrightarrow\infty.
\end{multline*}
Here the implicit constant is independent of $k$, and the
divergence follows from \eqref{eq:growth}. Permuting the kernel
variables gives the assertion for any prescribed pair of $L^1$
inputs. This completes the proof of Theorem~\ref{thm:main}.
\hfill$\square$

\section{Notes}

It remains an interesting question whether the weak-type
estimate holds when exactly one of the input exponents equals
one. There are several other bilinear and multilinear operators
for which endpoint weak-type estimates remain open, such as
the bilinear Hilbert transform.

\section*{Acknowledgement}

The arithmetic construction in Section~\ref{sec:arithmetic}
was found with the help of ChatGPT.

\end{document}